\theoremstyle{plain}
\newtheorem{thm}{Theorem}[section]
\newtheorem{lem}[thm]{Lemma}
\newtheorem{cor}[thm]{Corollary}
\theoremstyle{definition}
\newtheorem{definition}[thm]{Definition}
\newtheorem{rem}[thm]{Remark}
\newtheorem{ex}[thm]{Example}
\newcommand{\R}{\mathbb{R}}
\newcommand{\Z}{\mathbb{Z}}
\newcommand{\D}{\mathcal{D}}
\newcommand{\abs}[1]{\lvert {#1} \rvert}
\newcommand{\Conf}{\mathrm{Conf}\,}
\newcommand{\fK}[1]{\widetilde{\mathcal{K}}_{#1}}
\newcommand{\Int}{\mathrm{Int}\,}
\newcommand{\K}[1]{\mathcal{K}_{#1}}
\newcommand{\pair}[2]{\langle #1 ,\, #2 \rangle}
\newcommand{\fig}[1]{\raisebox{-0.2\height}{\includegraphics[scale=0.5]{#1}}}
\numberwithin{equation}{section}
\numberwithin{figure}{section}
\title[An integral expression of the first one-cocycle of $\K{3}$]
{An integral expression of the first non-trivial one-cocycle of the space of long knots in $\R^3$}
\author{Keiichi Sakai}
\address{Department of Mathematical Sciences, Shinshu University \\ 3-1-1 Asahi, Matsumoto, Nagano 390-8621, Japan}
\email{ksakai@math.shinshu-u.ac.jp}
\urladdr{http://math.shinshu-u.ac.jp/~ksakai/index.html}
\date{}
\thanks{The author is partially supported by Grant-in-Aid for Young Scientists (B) 21740038, The Sumitomo Foundation,
The Iwanami Fujukai Foundation, and JSPS Research Fellowships for Young Scientists 228006.}
\keywords{The space of long knots; configuration space integrals; non-trivalent graphs; an action of little cubes; Gramain cycles; Casson's knot invariant}
\subjclass[2000]{58D10; 55P48, 57M25, 57M27, 81Q30}
\begin{document}

\begin{abstract}
Our main object of study is a certain degree-one cohomology class of the space $\K{3}$ of long knots in $\R^3$.
We describe this class in terms of graphs and configuration space integrals, showing the vanishing of some anomalous obstructions.
To show that this class is not zero, we integrate it over a cycle studied by Gramain.
As a corollary, we establish a relation between this class and ($\R$-valued) Casson's knot invariant.
These are $\R$-versions of the results which were previously proved by Teiblyum, Turchin and Vassiliev over $\Z /2$ in a different way from ours.
\end{abstract}

\maketitle

\section{Introduction}\label{sec:intro}

A {\em long knot} in $\R^n$ is an embedding $f:\R^1 \hookrightarrow \R^n$ that agrees with the standard inclusion $\iota (t)=(t,0,\dots ,0)$ outside $[-1,1]$.
We denote by $\mathcal{K}_n$ the space of long knots in $\R^n$ equipped with $C^{\infty}$-topology.

In \cite{CCL02} a cochain map $I:\D^* \to \Omega^*_{DR}(\K{n})$ from certain {\em graph complex} $\D^*$ was constructed for $n>3$.
The cocycles of $\K{n}$ corresponding to {\em trivalent graph cocycles} via $I$ generalize an integral expression of finite type invariants for (long) knots in $\R^3$ (see \cite{AltschulerFreidel97, BottTaubes94, Kohno94, Volic05}).
In \cite{K07} the author found a {\em nontrivalent} graph cocycle $\Gamma \in \D^*$ and proved that, when $n>3$ is odd, it gives a non-zero cohomology class $[I(\Gamma )] \in H^{3n-8}_{DR}(\K{n})$.
On the other hand, when $n=3$, some obstructions to $I$ being a cochain map (called {\em anomalous obstructions}; see for example \cite[\S 4.6]{Volic05}) may survive, so even the closedness of $I(\Gamma )$ was not clear.
However, the obstructions for trivalent graph cocycles $X$ (of ``even orders'') in fact vanish \cite{AltschulerFreidel97}, hence the map $I$ still yields closed zero-forms $I(X)$ of $\K{3}$ (they are finite type invariants).
This raises our hope that all the obstructions for any graphs may vanish and hence the map $I$ would be a cochain map even when $n=3$.

In this paper we will show (in Theorem \ref{thm:closed}) that the obstructions for the nontrivalent graph cocycle $\Gamma$ mentioned above also vanish, hence the map $I$ yields the first example of a closed one-form $I(\Gamma )$ of $\K{3}$.
To show that $[I(\Gamma )]\in H^1_{DR}(\K{3})$ is not zero, we will study in part how $I(\Gamma )$ fits into a description of the homotopy type of $\K{3}$ given in \cite{Budney05_2, Budney03, BudneyCohen05}.
It is known that on each component $\K{3}(f)$ that contains $f \in \K{3}$, there exists a one-cycle $G_f$ called the {\em Gramain cycle} \cite{Gramain77, Budney05_2, Turchin06, Vassiliev01}.
The Kronecker pairing gives an isotopy invariant $V:f\mapsto \pair{I(\Gamma )}{G_f}$.
We show in Theorem \ref{thm:Gramain} that $V$ coincides with {\em Casson's knot invariant} $v_2$, which is characterized as the coefficient of $z^2$ in the Alexander-Conway polynomial.
This result will be generalized in Theorem \ref{thm:bracket} for one-cycles obtained by using an action of {\em little two-cubes operad} on the space $\fK{3}$ of {\em framed} long knots \cite{Budney03}.

Closely related results have appeared in \cite{Turchin06,Vassiliev01}, where the $\Z /2$-reduction of a cocycle $v^1_3$ of $\K{n}$ ($n\ge 3$), appearing in the $E_1$-term of Vassiliev's spectral sequence \cite{Vassiliev92}, was studied.
A natural quasi-isomorphism $\D^* \to E_0 \otimes \R$ maps our cocycle $\Gamma$ to $v^1_3$.
In this sense, our results can be seen as ``lifts'' of those in \cite{Turchin06,Vassiliev01} to $\R$.

The invariant $v_2$ can also be interpreted as the linking number of collinearity manifolds \cite{BCSS03}.
Notice that in each formulation (including the one in this paper) the value of $v_2$ is computed by counting some collinearity pairs on the knot.

\section{Construction of a close differential form}\label{s:closed}

\subsection{Configuration space integral}\label{ss:CSI}
We briefly review how we can construct (closed) forms of $\K{n}$ from graphs.
For full details see \cite{CCL02, Volic05}.

Let $X$ be a {\em graph} in a sense of \cite{CCL02, Volic05} (see Figure \ref{fig:Gamma} for examples).
Let $v_{\rm i}$ and $v_{\rm f}$ be the numbers of the {\em interval vertices} (or {\em i-vertices} for short; those on the specified oriented line) and the {\em free} vertices (or {\em f-vertices}; those which are not interval vertices) of $X$, respectively.
With $X$ we associate a configuration space
\[
 C_X := \left\{ \left.
 \begin{array}{l}
  (f;x_1 ,\dots ,x_{v_{\rm i}};x_{v_{\rm i}+1},\dots ,x_{v_{\rm i}+v_{\rm f}}) \\
  \in \K{n} \times \Conf (\R^1, v_{\rm i}) \times \Conf (\R^n, v_{\rm f})
 \end{array}
 \, \right| \,
 \begin{array}{l}
  f(x_i )\ne x_j \text{ for any} \\
  1\le i \le v_{\rm i} <j \le v_{\rm i}+v_{\rm f}
 \end{array}
 \right\}
\]
where $\Conf (M,k):=M^{\times k}\setminus \bigcup_{1\le i<j\le k}\{ x_i=x_j\}$ for a space $M$.

Let $e$ be the number of the edges of $X$.
Define $\omega_X \in \Omega^{(n-1)e}_{DR}(C_X )$ as the wedge of closed $(n-1)$-forms $\varphi^*_{\alpha}{\rm vol}_{S^{n-1}}$, where $\varphi_{\alpha}:C_X \to S^{n-1}$ is the {\em Gauss map}, which assigns a unit vector determined by two points in $\R^n$ corresponding to the vertices adjacent to an edge $\alpha$ of $X$ (for an i-vertex corresponding to $x_i\in\R^1$, we consider the point $f(x_i)\in\R^n$).
Here we assume that ${\rm vol}_{S^{n-1}}$ is ``(anti)symmetric'', namely $i^* {\rm vol}_{S^{n-1}}=(-1)^n {\rm vol}_{S^{n-1}}$ for the antipodal map $i:S^{n-1}\to S^{n-1}$.
Then $I(X)\in\Omega^{(n-1)e-v_{\rm i}-nv_{\rm f}}_{DR}(\K{n})$ is defined by
\[
 I(X):=(\pi_X )_* \omega_X,
\]
the integration along the fiber of the natural fibration $\pi_X:C_X\to \K{n}$.
This fiber is a subspace of $\Conf (\R^1, v_{\rm i}) \times \Conf (\R^n, v_{\rm f})$.
Such integrals converge, since the fiber can be compactified in such a way that the forms $\varphi^*_{\alpha}{\rm vol}_{S^{n-1}}$ are still well-defined on the compactification (see \cite[Proposition 1.1]{BottTaubes94}).
We extend $I$ linearly onto $\D^*$, a cochain complex spanned by graphs.
The differential $\delta$ of $\D^*$ is defined as a signed sum of graphs obtained by ``contracting'' the edges one at a time.

One of the results of \cite{CCL02} states that $I:\D^*\to\Omega^*_{DR}(\K{n})$ is a cochain map if $n>3$.
The proof is outlined as follows.
By the generalized Stokes theorem, $dI(X)=\pm (\pi^{\partial}_X )_*\omega_X$, where $\pi^{\partial}_X$ is the restriction of $\pi_X$ to the codimension one strata of the boundary of the (compactified) fiber of $\pi_X$.
Each codimension one stratum corresponds to a collision of subconfigurations in $C_X$, or equivalently to $A\subset V(X)\cup \{\infty\}$ (here $V(X)$ is the set of vertices of $X$) with a {\em consecutiveness property};
if two i-vertices $p,q$ are in $A$, then all the other i-vertices between $p$ and $q$ are in $A$.
Here ``$\infty\in A$'' means that the points $x_l$ ($l\in A$) escape to infinity.
When $\infty\not\in A$, the interior $\Int\Sigma_A$ of the corresponding stratum $\Sigma_A$ to $A$ is described by the following pullback square
\begin{equation}\label{eq:codim-1-strata}
\begin{split}
 \xymatrix{
        & \Int \Sigma_A \ar[r]\ar[d]\ar[ld]_-{\pi^{\partial_A}_X} & \hat{B}_A \ar[d]^-{\rho_A} \\
  \K{n} & C_{X/X_A} \ar[r]_-{D_A} \ar[l]^-{\pi_{X/X_A}}           & B_A
 }
\end{split}
\end{equation}
Here
\begin{itemize}
\item
 $X_A$ is the maximal subgraph of $X$ with $V(X_A )=A$, and $X/X_A$ is a graph obtained by collapsing the subgraph $X_A$ to a single vertex $v_A$;
\item
 $B_A =S^{n-1}$ if $A$ contains at least one i-vertex, and $B_A =\{ * \}$ otherwise;
\item
 If $A$ consists of i-vertices $i_1 ,\dots ,i_s$ ($s>0$) and f-vertices $i_{s+1},\dots ,i_{s+t}$, then
\[
 \hat{B}_A := \left\{ \left.
 \begin{array}{l}
  (v;(x_{i_1},\dots ,x_{i_s};x_{i_{s+1}},\dots ,x_{i_{s+t}})) \\
  \in S^{n-1} \times \Conf (\R^1 ,s) \times \Conf (\R^n ,t)
 \end{array}
 \, \right| \, \left.
 \begin{array}{l}
  x_{i_p} v \ne x_{i_q}\text{ for any}\\
  1 \le p \le s < q \le s+t
 \end{array}
 \right\} \right\slash \sim
\]
 where $\sim$ is defined as
\begin{multline*}
 (v;(x_{i_1},\dots ,x_{i_s};x_{i_{s+1}},\dots ,x_{i_{s+t}})) \sim \\
 (v;(a(x_{i_1}+r),\dots ,a(x_{i_s}+r);a(x_{i_{s+1}}+rv),\dots ,a(x_{i_{s+t}}+rv)))
\end{multline*}
 for any $a\in \R_{>0}$ and $r\in \R$
 (if $A$ consists only of $t$ f-vertices, then
\[
 \hat{B}_A :=\Conf (\R^n ,t)/(\R^1_{>0} \rtimes \R^n ),
\]
where $\R^1_{>0} \rtimes \R^n$ acts on $\Conf (\R^n ,t)$ by scaling and translation);
\item
 $\rho_A$ is the natural projection;
\item
 when $A$ contains at least one i-vertices, $D_A :C_{X/X_A} \to S^{n-1}$ maps $(f;(x_i))$ to $f'(x_{v_A})/\abs{f'(x_{v_A})}$.
\end{itemize}
We omit the case $\infty\in A$; see \cite[Appendix]{CCL02}.

By properties of fiber integrations and pullbacks, the integration of $\omega_X$ along $\Int\Sigma_A$ can be written as $(\pi_{X/X_A})_* (\omega_{X/X_A} \wedge D^*_A (\rho_A )_* \hat{\omega}_{X_A})$, where $\hat{\omega}_{X_A} \in \Omega^*_{DR}(\hat{B}_A )$ is defined similarly to $\omega_X \in \Omega^*_{DR}(C_X )$.

The stratum $\Sigma_A$ is called {\em principal} if $\abs{A}=2$ {\em hidden} if $\abs{A}\ge 3$, and {\em infinity} if $\infty\in A$.
Since two-point collisions correspond to contractions of edges, we have $dI(X)=I(\delta X)$ modulo the integrations along hidden and infinity faces.
When $n>3$, the hidden/infinity contributions turn out to be zero;
in fact $(\rho_A)_* \hat{\omega}_{X_A}=0$ if $n>3$ and if $A$ is not principal (see \cite[Appendix]{CCL02} or the next Example~\ref{ex:3pts}).
This proves that the map $I$ is a cochain map if $n>3$.

\begin{ex}\label{ex:3pts}
Here we show one example of vanishing of an integration along a hidden face $\Sigma_A$.
Let $X$ be the seventh graph in Figure \ref{fig:Gamma} and $A:=\{ 1,4,5\}$.
Then in \eqref{eq:codim-1-strata}, $B_A=S^{n-1}$ since $A$ contains an i-vertex $1$, and
\[
 \hat{B}_A =\{ (v;x_1;x_4,x_5)\in S^{n-1}\times \R^1\times\Conf (\R^n ,2)\, |\,
 x_1 v\ne x_4,x_5\} /\sim ,
\]
where $(v;x_1;x_4,x_5)\sim (v;a(x_1+r);a(x_4+rv),a(x_5+rv))$ for any $a>0$ and $r\in\R^1$.
The subgraph $X_A$ consists of three vertices $1,4,5$ and three edges $14$, $15$ and $45$.
The open face $\Int\Sigma_A$, where three points $f(x_1),x_4$ and $x_5$ collide with each other, is a hidden face and is described by the square \eqref{eq:codim-1-strata}.
Then the integration of $\omega_X$ along $\Int\Sigma_A$ is $(\pi_{X/X_A})_* (\omega_{X/X_A} \wedge D^*_A (\rho_A )_* \hat{\omega}_{X_A})$, where
\begin{gather*}
 \hat{\omega}_{X_A}=\varphi^*_{14}{\rm vol}_{S^{n-1}}\wedge \varphi^*_{15}{\rm vol}_{S^{n-1}}
 \wedge \varphi^*_{45}{\rm vol}_{S^{n-1}} \in \Omega^{3(n-1)}_{DR}(\hat{B}_A);\\
 \varphi_{1j}:=\frac{x_j-x_1v}{\abs{x_j-x_1v}}\ \ (j=4,5),\quad
 \varphi_{45}:=\frac{x_5-x_4}{\abs{x_5-x_4}}.
\end{gather*}
In this case we can prove that $(\rho_A )_* \hat{\omega}_{X_A}=0$, hence the integration of $\omega_X$ along $\Int\Sigma_A$ vanishes.
Indeed a fiberwise involution $\chi :\hat{B}_A\to\hat{B}_A$ defined by
\[
 \chi (v;x_1;x_4,x_5):= (v;x_1;2x_1v-x_4,2x_1v-x_5)
\]
preserves the orientation of the fiber but $\chi^*\hat{\omega}_{X_A}=-\hat{\omega}_{X_A}$ (here we use that ${\rm vol}_{S^{n-1}}$ is antisymmetric), hence we have $(\rho_A )_* \hat{\omega}_{X_A}=-(\rho_A )_* \hat{\omega}_{X_A}$.\qed
\end{ex}

\subsection{Nontrivalent cocycle}\label{ss:nontrivalent}
It is shown in \cite{CCL02} that, when $n>3$, the induced map $I$ on cohomology restricted to the space of trivalent graph cocycles is injective.
In \cite{K07}, the author gave the first example of a nontrivalent graph cocycle $\Gamma$ (Figure \ref{fig:Gamma}) which also gives a nonzero class $[I(\Gamma )]\in H^{3n-8}_{DR}(\mathcal{K}_n )$ when $n>3$ is odd.
\begin{figure}
\includegraphics{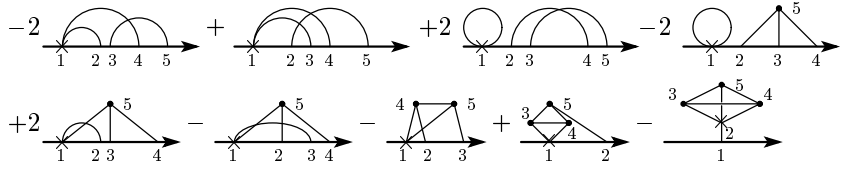}
\caption{A graph cocycle $\Gamma$}
\label{fig:Gamma}
\end{figure}
In Figure~\ref{fig:Gamma}, nontrivalent vertices and trivalent f-vertices are marked by $\times$ and $\bullet$, respectively, and other crossings are not vertices.
Here we say an i-vertex $v$ is trivalent if there is exactly one edge emanating from $v$ other than the specified oriented line.
Each edge $ij$ ($i<j$) is oriented so that $i$ is the initial vertex.

\begin{rem}
An analogous nontrivalent graph cocycle for the space of embeddings $S^1\hookrightarrow\R^n$ for even $n\ge 4$ can be found in \cite{Longoni04}.\qed
\end{rem}

If $n=3$, integrations along some hidden faces (called {\em anomalous contributions}) might survive, and hence the map $I$ might fail to be a cochain map.
However, nonzero anomalous contributions arise from limited hidden faces.

\begin{thm}\label{thm:nonzero_anomalous}
Let $X$ be a graph and $A\subset V(X)\cup\{\infty\}$ be such that $\Sigma_A$ is not principal.
When $n=3$, the integration of $\omega_X$ along $\Sigma_A$ can be nonzero only if the subgraph $X_A$ is trivalent.
\end{thm}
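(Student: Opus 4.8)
The plan is to reduce the claim to the vanishing of the inner fibre integral $(\rho_A)_*\hat\omega_{X_A}$, and then to eliminate every non-trivalent $X_A$ by a dimension count reinforced by an antipodal involution of the type used in Example~\ref{ex:3pts}. By the pullback square~\eqref{eq:codim-1-strata} and the formula for the integration along $\Int\Sigma_A$, the contribution of $\Sigma_A$ equals $(\pi_{X/X_A})_*(\omega_{X/X_A}\wedge D_A^*(\rho_A)_*\hat\omega_{X_A})$, so it is enough to prove $(\rho_A)_*\hat\omega_{X_A}=0$ whenever $X_A$ is not trivalent. I would carry this out for the finite faces ($\infty\notin A$) and dispose of the infinity faces by the argument of \cite[Appendix]{CCL02}.

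First I would fix the numerics. For $n=3$, with $s\ge 1$ i-vertices and $t$ f-vertices in $A$, the quotient by scaling and translation gives $\dim\hat B_A=s+3t$, while $B_A=S^2$; hence the fibre of $\rho_A$ has dimension $s+3t-2$. Since $\hat\omega_{X_A}$ is a wedge of $(n-1)=2$-forms over the internal edges of $X_A$, its degree is $2e_A$, where $e_A$ is the number of internal edges; thus $(\rho_A)_*\hat\omega_{X_A}$ is a form of degree $2e_A-(s+3t-2)$ on $S^2$, and it can be non-zero only if $s+3t-2\le 2e_A\le s+3t$. (When $s=0$ one has $\dim\hat B_A=3t-4$ and $B_A$ a point, forcing $2e_A=3t-4$.) I would read these inequalities through the handshake identity $2e_A=\sum_w(\text{internal valence of }w)$.

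Next comes a local degree argument at the vertices. An f-vertex $w$ contributes three fibre directions $dx_w$, whereas each internal edge at $w$ contributes at most degree two in $dx_w$; so if $w$ has internal valence at most one, the integrand carries no top-degree component along $x_w$ and $(\rho_A)_*\hat\omega_{X_A}=0$. Hence every f-vertex of $X_A$ has internal valence at least two, and the analogous count (an i-vertex contributes a single fibre direction) shows each i-vertex of $X_A$ must carry at least one internal chord. For the configurations that survive this — in particular f-vertices of internal valence exactly two, and vertices that are over-valent in $X$ — I would build a fibrewise involution $\chi$ of $\hat B_A$ generalizing the one in Example~\ref{ex:3pts}, reflecting a suitable set of f-points through the appropriate centre ($x_iv$ for an adjacent i-vertex, or a collision centre). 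Because $\mathrm{vol}_{S^{n-1}}$ is antisymmetric and $n=3$ is odd, I would arrange, case by case, that $\chi$ preserves the fibre orientation while reversing an odd number of Gauss maps, so that $\chi^*\hat\omega_{X_A}=-\hat\omega_{X_A}$ and the push-forward vanishes. Together with the dimension window this leaves only trivalent $X_A$ as possible non-zero contributions.

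The main obstacle is the uniform construction and verification of $\chi$: one must check that it descends to the quotient $\hat B_A$ and respects the non-collision constraints $x_{i_p}v\ne x_{i_q}$, that it preserves the orientation of the fibre of $\rho_A$, and — the crux — that the number of Gauss maps it reverses is odd in every non-trivalent case, which is exactly where the oddness of $n=3$ enters. Tracking the interaction of the reflection with the scaling–translation quotient, and with the external edges (whose Gauss directions limit to $v$ and so contribute only through $D_A$ and the base), is the delicate bookkeeping; the over-valent case, where the excess valence is carried by external edges, is where I expect to have to combine this symmetry with the upper bound $2e_A\le s+3t$ from the dimension count rather than using the involution alone.
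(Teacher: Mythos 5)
Your proposal is correct and takes essentially the same route as the paper's proof: reduce the claim to $(\rho_A)_*\hat\omega_{X_A}=0$, dispose of infinity faces by the dimensional arguments of \cite[Appendix]{CCL02}, eliminate vertices of low valence by degree counts and reflection involutions (the paper handles this by citing \cite[Lemmas A.7--A.9]{CCL02} and the involution of Example~\ref{ex:3pts}), and eliminate over-valent $X_A$ by the handshake identity played against the dimension window. One correction to your expected difficulties: the over-valent case needs no involution at all --- once vertices of valence $\le 2$ have been excluded, every vertex of $X_A$ has internal valence $\ge 3$ and some vertex has internal valence $\ge 4$, so $2e_A\ge s+3t+1$ violates your upper bound $2e_A\le s+3t$ outright (this is exactly the paper's argument) --- and ``excess valence carried by external edges'' is a non-issue, since the trivalence hypothesis of the theorem and the form $\hat\omega_{X_A}$ involve only edges internal to $X_A$, so a vertex whose extra edges leave $A$ may perfectly well lie on a trivalent $X_A$, a case in which no vanishing is claimed.
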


Our main theorem is proved by using Theorem \ref{thm:nonzero_anomalous}.

\begin{thm}\label{thm:closed}
$I(\Gamma ) \in \Omega^1_{DR}(\K{3})$ is a closed form.
\end{thm}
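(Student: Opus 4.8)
The plan is to apply the generalized Stokes theorem exactly as reviewed in Subsection~\ref{ss:CSI}. Writing $\Gamma=\sum_k c_k X_k$ as a linear combination of graphs, we obtain
\[
 dI(\Gamma)=I(\delta\Gamma)+\sum_{k,A}\pm(\pi_{X_k/(X_k)_A})_*\bigl(\omega_{X_k/(X_k)_A}\wedge D^*_A(\rho_A)_*\hat\omega_{(X_k)_A}\bigr),
\]
where the sum runs over the hidden and infinity strata $\Sigma_A$ of each $X_k$. Since $\Gamma$ is a graph cocycle we have $\delta\Gamma=0$, so $I(\delta\Gamma)=0$ and the entire content of the theorem reduces to the vanishing of the remaining anomalous sum.

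First I would invoke Theorem~\ref{thm:nonzero_anomalous} to delete every term whose collapsing subgraph $(X_k)_A$ is not trivalent; in dimension $n=3$ all such terms are already zero. This turns an a~priori unwieldy problem into a short, completely explicit one: for each of the finitely many summands $X_k$ of $\Gamma$ I would list the subsets $A$ (with the consecutiveness property and either $\abs{A}\ge 3$ or $\infty\in A$) for which $(X_k)_A$ is trivalent. Because each $X_k$ has only a handful of vertices, this enumeration is finite and manageable, and it is precisely here that the nontrivalent vertices of $\Gamma$ (the $\times$-marked ones) sharply restrict the admissible $A$.

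For each surviving stratum I would establish vanishing by one of two routes. The first is an orientation-reversing symmetry of the anomaly integrand in the spirit of Example~\ref{ex:3pts}: if a fiberwise involution of $\hat B_A$ preserves the fiber orientation but sends $\hat\omega_{(X_k)_A}$ to $-\hat\omega_{(X_k)_A}$ (using the antisymmetry of ${\rm vol}_{S^{n-1}}$), then $(\rho_A)_*\hat\omega_{(X_k)_A}=0$ and the term drops out. The second is cancellation: terms coming from different summands $X_k$, or from different faces of the graphs, may occur with opposite signs and cancel once the orientation conventions are fixed.

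The main obstacle will be the residual strata for which neither an obvious involution nor a pairwise cancellation is immediately visible---the genuinely nonzero trivalent anomalies, of the same type that survive for finite-type invariants, together with the infinity faces, which typically demand separate treatment. The crux of the argument is to show that, for the specific cocycle $\Gamma$, the coefficients $c_k$ and the edge orientations of Figure~\ref{fig:Gamma} are arranged so that these residual contributions sum to zero. I expect the decisive and most delicate step to be the careful bookkeeping of the signs produced by the fiber orientations together with the antisymmetry of ${\rm vol}_{S^{n-1}}$, since it is only through these signs that the various trivalent-collapse contributions can be made to conspire and cancel.
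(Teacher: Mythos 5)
Your reduction is exactly the paper's: apply the generalized Stokes theorem, use $\delta\Gamma=0$, and invoke Theorem~\ref{thm:nonzero_anomalous} to discard every non-principal face whose collapsing subgraph is not trivalent. The gap is in what comes after. Having reduced to the finitely many hidden faces with trivalent $X_A$ --- concretely, only $\Gamma_3$, $\Gamma_4$ and $\Gamma_9$ contain a trivalent subgraph satisfying the consecutiveness property, namely the one with $A=\{2,3,4,5\}$ --- you never actually prove that these contributions vanish. You offer two mechanisms (an orientation-reversing fiberwise involution in the spirit of Example~\ref{ex:3pts}, or sign cancellation among faces of different graphs), and then concede that the residual trivalent anomalies may require a sign conspiracy which you do not carry out. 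Neither of your two mechanisms is the one that works here, and the sign bookkeeping you flag as the decisive step never enters: each of the three surviving faces vanishes individually, with no cancellation across graphs at all.

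The arguments that are actually needed, one per face, are of a different nature. For $\Gamma_3$, the fiber of $\rho_A$ carries a fiberwise free $\R_{>0}$-action (simultaneous translation of $x_2$ and $x_4$, cf.\ \cite[Proposition 4.1]{Volic05}) which \emph{preserves} $\hat{\omega}_{X_A}$, so $(\rho_A)_*\hat{\omega}_{X_A}=0$ for dimensional reasons; this is a symmetry argument, but not an orientation-reversing involution. For $\Gamma_4$, the form $\hat{\omega}_{X_A}$ is identically zero on $\hat{B}_A$ because the Gauss map to $(S^2)^3$ determined by the three edges of $X_A$ has image of positive codimension (the Bott--Taubes argument). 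For $\Gamma_9$, one simply counts degrees: $\deg (\rho_A)_*\hat{\omega}_{X_A}=4$ exceeds $\dim B_A=0$. Without producing these (or equivalent) per-face vanishing arguments, the proposal stops short of proving the theorem: the step you defer as ``careful bookkeeping'' is precisely the content of the proof, and the route you sketch for it --- cancellation arranged by the coefficients and edge orientations of $\Gamma$ --- is not how the vanishing happens.
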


\begin{proof}
We call the nine graphs in Figure \ref{fig:Gamma} $\Gamma_1 ,\dots ,\Gamma_9$ respectively.
The graphs $\Gamma_i$, $i\ne 3,4,9$, do not contain trivalent subgraphs $X_A$ satisfying the consecutive property (see the paragraph just before \eqref{eq:codim-1-strata}).
So $dI(\Gamma_i)=I(d\Gamma_i)$ for $i\ne 3,4,9$ by Theorem~\ref{thm:nonzero_anomalous}.

Possibly the integration of $\omega_{\Gamma_i}$ ($i=3,4,9$) along $\Sigma_A$ ($A:=\{ 2,\dots ,5\}$) might survive, since the corresponding subgraph $X_A$ is trivalent.
However, we can prove $(\rho_A)_*\hat{\omega}_{X_A}=0$ (and hence $dI(\Gamma_i)=I(d\Gamma_i)$) as follows:
$(\rho_A )_* \hat{\omega}_{X_A}=0$ for $\Gamma_3$, because there is a fiberwise free action of $\R_{>0}$ on $\hat{B}_A$ given by translations of $x_2$ and $x_4$ (see \cite[Proposition 4.1]{Volic05}) which preserves $\hat{\omega}_{X_A}$.
Thus $(\rho_A)_*\hat{\omega}_{X_A}=0$ by dimensional reason.
The proof for $\Gamma_4$ has appeared in \cite[page 5271]{BottTaubes94};
$\hat{\omega}_{X_A}=0$ on $\hat{B}_A$ since the image of the Gauss map $\varphi :B_A \to (S^2 )^3$ corresponding to three edges of $X_A$ is of positive codimension.
As for $\Gamma_9$, $(\rho_A )_*\hat{\omega}_{X_A}=0$ follows from $\deg (\rho_A )_*\hat{\omega}_{X_A}=4$ which exceeds $\dim B_A$ (in fact $B_A=\{ *\}$ in this case).
\end{proof}

\begin{proof}[Proof of Theorem~\ref{thm:nonzero_anomalous}]
Let $A$ be a subset of $V(X)$ with $\abs{A}\ge 3$ or $\infty\in A$, and $X_A$ is nontrivalent.
We must show the vanishing of the integrations along the nonprincipal face $\Sigma_A$ of the fiber of $C_X\to\K{3}$.
To do this it is enough to show $(\rho_A )_* \hat{\omega}_{X_A}=0$.
By dimensional arguments (see \cite[(A.2)]{CCL02}) the contributions of infinite faces vanish.
So below we consider the hidden faces $\Sigma_A$ with $\abs{A} \ge 3$.

If $X_A$ has a vertex of valence $\le 2$, then $(\rho_A )_*\hat{\omega}_{X_A}=0$ is proved by dimensional arguments or existence of a fiberwise symmetry of $B_A$ which reverses the orientation of the fiber of $\rho_A :\hat{B}_A\to B_A$ but preserves the integrand $\hat{\omega}_{X_A}$ (like $\chi$ from Example \ref{ex:3pts}, see also \cite[Lemmas A.7-A.9]{CCL02}).

Next, consider the case that there is a vertex of $X_A$ of valence $\ge 4$.
Let $e$, $s$ and $t$ be the numbers of the edges, the i-vertices and the f-vertices of $X_A$ respectively.
Then $\deg \hat{\omega}_{X_A}=2e$ and the dimension of the fiber of $\rho_A$ is $s+3t-k$, where $k=2$ or $4$ according to whether $s>0$ or $s=0$ (see \cite[(A.1)]{CCL02}).
Thus $(\rho_A )_*\hat{\omega}_{X_A}\in\Omega^*_{DR}(B_A)$ is of degree $2e-s-3t+k$.
It is not difficult to see $2e-s-3t>0$ because at least one vertex of $X_A$ is of valence $\ge 4$.
Hence $\deg (\rho_A )_* \hat{\omega}_{X_A}$ exceeds $\dim B_A$ ($=0$ or $2$) and hence $(\rho_A )_* \hat{\omega}_{X_A}=0$.

Thus only the integrations along $\Sigma_A$ with $X_A$ trivalent can survive.
\end{proof}

\begin{rem}
Every finite type invariant $v$ for long knots in $\R^3$ can be written as a sum of $I(\Gamma_v )$ ($\Gamma_v$ is a trivalent graph cocycle) and some ``correction terms'' which kill the contributions of hidden faces corresponding to trivalent subgraphs (see \cite{AltschulerFreidel97, BottTaubes94, Kohno94, Volic05}).
So by Theorem \ref{thm:nonzero_anomalous} the problem whether $I:\D^*\to\Omega^*_{DR}(\K{3})$ is a cochain map or not is equivalent to the problem whether one can eliminate all the correction terms from integral expressions of finite type invariants.\qed
\end{rem}

\section{Evaluation on some cycles}\label{sec:evaluation}
Here we will show that $[I(\Gamma )]\in H^1_{DR}(\K{3})$ restricted to some components of $\K{3}$ is not zero.

We introduce two assumptions to simplify computations.

{\bf Assumption 1}.
The support of (antisymmetric) ${\rm vol}_{S^2}$ is contained in a sufficiently small neighborhood of the poles $(0,0,\pm 1)$ as in \cite{K07}.
So only the configurations with the images of the Gauss maps lying in a neighborhood of $(0,0,\pm 1)$ can nontrivially contribute to various integrals below.
Presumably $[I(\Gamma )]\in H^1_{DR}(\K{3})$ may be independent of choices of ${\rm vol}_{S^2}$ (see \cite[Proposition 4.5]{CCL02}).

{\bf Assumption 2}.
Every long knot in $\R^3$ is contained in $xy$-plane except for over-arc of each crossing, and each over-arc is in $\{ 0\le z\le h \}$ for a sufficiently small $h>0$ so that the projection onto $xy$-plane is a regular diagram of the long knot.

\subsection{The Gramain cycle}\label{ss:Gramain}
For any $f\in\K{3}$, we denote by $\K{3}(f)$ the component of $\K{3}$ which contains $f$.
Regarding $S^1 =\R /2\pi\Z$ and fixing $f$, we define the map $G_f:S^1 \to\K{3}(f)$, called the {\em Gramain cycle}, by $G_f(s)(t):=R(s)f(t)$, where $R(s)\in SO(3)$ is the rotation by the angle $s$ fixing ``long axis'' (the $x$-axis).
$G_f$ generates an infinite cyclic subgroup of $\pi_1(\K{3}(f))$ if $f$ is nontrivial \cite{Gramain77}.
The homology class $[G_f]\in H_1(\K{3}(f))$ is independent of the choice of $f$ in the connected component;
if $f_t\in\K{3}$ ($0\le t\le 1$) is an isotopy connecting $f_0$ and $f_1$, then $G_{f_t}:[0,1]\times S^1\to\K{3}$ gives a homotopy between $G_{f_0}$ and $G_{f_1}$.
Therefore the Kronecker pairing gives an isotopy invariant $V(f):=\pair{I(\Gamma)}{G_f}$ for long knots.

\begin{thm}\label{thm:Gramain}
The invariant $V$ is equal to Casson's knot invariant $v_2$.
\end{thm}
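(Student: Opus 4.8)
The plan is to evaluate $V(f)=\pair{I(\Gamma)}{G_f}=\int_{S^1}G_f^*I(\Gamma)$ directly as a configuration space integral over the one-parameter family of rotated knots, and then to recognize the answer as a Gauss-diagram count for $v_2$. Since $I(\Gamma)=\sum_i(\pi_{\Gamma_i})_*\omega_{\Gamma_i}$ is a fiber integral, the first step is base change: pulling the fibration $\pi_{\Gamma_i}:C_{\Gamma_i}\to\K{3}$ back along $G_f:S^1\to\K{3}(f)$ to a bundle $\hat{C}_{\Gamma_i}\to S^1$ with induced map $\hat{G}_f:\hat{C}_{\Gamma_i}\to C_{\Gamma_i}$, and using that fiber integration commutes with pullback, gives
\[
 V(f)=\sum_{i=1}^{9}\int_{\hat{C}_{\Gamma_i}}\hat{G}_f^{\,*}\omega_{\Gamma_i}.
\]
Here $\hat{C}_{\Gamma_i}$ is the space of configurations on the knots $R(s)f$ together with the angle $s\in S^1$, and a dimension count shows that $\dim\hat{C}_{\Gamma_i}$ equals the degree of $\omega_{\Gamma_i}$, so we integrate a top-dimensional form.

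The second step is localization via the two assumptions. By Assumption 1 the form $\hat{G}_f^{\,*}\omega_{\Gamma_i}$ is supported where every Gauss map lands near a pole $(0,0,\pm1)$, i.e.\ where each edge of $\Gamma_i$ points nearly vertically after rotation. By Assumption 2 the knot lies in the $xy$-plane except for small over-arcs, so writing $\Delta y,\Delta z$ for the coordinate differences of the endpoints $f(t_1),f(t_2)$ of a chord, the rotated direction $R(s)(f(t_2)-f(t_1))$ is vertical exactly when $f_x(t_1)=f_x(t_2)$ and $\tan s=\Delta y/\Delta z$. Thus the integrand concentrates on configurations whose point-pairs lie on the collinearity manifolds $\{f_x(t_1)=f_x(t_2)\}$ in $\Conf(\R^1,2)$, the rotation angle being determined by each pair; such vertical chords occur both at crossings of the diagram (where $\Delta y\approx 0$, so $s\approx 0$) and at the remaining $x$-collinear pairs (where $\Delta z=0$, so $s\approx\pm\pi/2$).

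The third step is to perform the resulting finite count for each graph $\Gamma_1,\dots,\Gamma_9$. After localization, $\int_{\hat{C}_{\Gamma_i}}\hat{G}_f^{\,*}\omega_{\Gamma_i}$ becomes the degree of the total Gauss map onto a product of spheres evaluated at the poles, which is a signed count of the isolated configurations in which all edges of $\Gamma_i$ are simultaneously vertical; the incidence structure of the graph couples the collinearity constraints at shared vertices. I expect most of the nine graphs to contribute $0$ for the same dimensional and symmetry reasons used in Example~\ref{ex:3pts} and Theorem~\ref{thm:nonzero_anomalous}, and the surviving terms to assemble into a sum over pairs of crossings of the diagram weighted by products of their signs. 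Matching this with a standard Gauss-diagram formula for Casson's invariant --- equivalently the linking number of collinearity manifolds of \cite{BCSS03} anticipated in the introduction --- identifies the count with $v_2$; to fix the overall normalization I would check $V=0$ on the unknot and $V=v_2$ on the trefoil.

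The main obstacle will be the orientation and sign bookkeeping, together with ruling out spurious contributions. Three independent sign sources must be reconciled: the antisymmetric choice of ${\rm vol}_{S^2}$ and hence of the two poles and the two branches $s\approx\pm\pi/2$; the coorientation of each collinearity manifold; and the fiber orientation of $\hat{C}_{\Gamma_i}$ relative to the $S^1$-factor and the configuration factors. In addition, although $I(\Gamma)$ is closed by Theorem~\ref{thm:closed}, the localized count could a priori receive unwanted contributions from configurations degenerating along principal or hidden faces inside the family over $S^1$, or from the over-arc regions where Assumption 2 is relaxed; showing that these cancel or vanish, so that precisely the signed crossing-pair terms of $v_2$ survive, is the delicate part of the argument.
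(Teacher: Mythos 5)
Your first two steps (base change to $\hat{C}_{\Gamma_i}\to S^1$ and localization of the integrand near the poles) reproduce the setup of the paper's computation, but from there your route diverges and, as written, has a genuine gap: the entire content of the theorem is deferred to the unproven expectation in your third step. You propose to evaluate $V(f)$ itself as a signed count of configurations with all edges vertical and to ``match'' it with a Gauss-diagram formula for $v_2$, but no mechanism is given for controlling the contributions that do not come from crossing pairs. These are substantial: besides the free-vertex configurations (which your localization discussion ignores), for a knot satisfying Assumption 2 every $x$-collinear chord lying in the $xy$-plane becomes vertical at the \emph{same} angles $s\approx\pm\pi/2$, so near those angles the verticality conditions are far from transverse and the preimage of the poles is, in the limit $h\to 0$, positive-dimensional; the poles are then not regular values of the total Gauss map and the integral there is not a finite signed count at all. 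Nothing in your plan shows that these contributions vanish or assemble correctly --- and they cannot simply be discarded, since already $V(\iota)=0$ forces whatever they contribute to cancel globally. You acknowledge this (``the delicate part''), but resolving it \emph{is} the proof.

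The paper avoids exactly this difficulty by never evaluating $V(f)$ directly. It instead proves that $V$ is a finite type invariant of order two and invokes the axiomatic characterization of $v_2$: one computes second differences $D^2V(f)=\sum_{\varepsilon_j=\pm1}\varepsilon_1\varepsilon_2 V(f_{\varepsilon_1,\varepsilon_2})$ over a pair of crossings, organized via the Browder operation as $D^2V(f)=\pair{I(\Gamma)}{r_*\lambda(e,v(f))}$. In the alternating sum, every configuration whose integral is continuous at $h=0$ cancels between the terms $\varepsilon_j=\pm1$; only configurations detecting both changed crossings survive, giving $D^2V=1$ for interleaved pairs and $0$ otherwise (Lemmas~\ref{lem:13-24} and \ref{lem:other_types}). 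This yields $D^3V=0$, hence order two, and then $V(\iota)=0$ (contractibility of $\K{3}(\iota)$ by Hatcher) and $V(3_1)=1$ finish the identification. If you want to keep your direct-count strategy, you must supply what the cancellation in the differences provides for free: a proof that the $s\approx\pm\pi/2$ and free-vertex contributions cancel, together with the full sign bookkeeping --- in effect an $\R$-valued analogue of the combinatorial formulas of Vassiliev and Turchin, which is a considerably harder computation than the one the paper carries out.
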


\begin{cor}\label{cor:nonzero1}
$[I(\Gamma )|_{\K{3}(f)}]\in H^1_{DR}(\K{3}(f))$ is not zero if $v_2(f)\ne 0$.\qed
\end{cor}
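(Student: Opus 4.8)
The plan is to compute the Kronecker pairing $V(f)=\pair{I(\Gamma)}{G_f}$ directly as a configuration space integral, by pulling the whole construction back along $G_f$. Since $I(\Gamma)=(\pi_\Gamma)_*\omega_\Gamma$, integration over the loop gives
\[
 V(f)=\int_{S^1}G_f^*I(\Gamma)=\int_{\widetilde{C}_\Gamma}\widetilde{\omega}_\Gamma,
\]
where $\widetilde{C}_\Gamma$ is the total space of the pullback of $\pi_\Gamma:C_\Gamma\to\K{3}$ along $G_f:S^1\to\K{3}(f)$ (a bundle over $S^1$ whose fiber is the compactified fiber of $\pi_\Gamma$) and $\widetilde{\omega}_\Gamma$ is the pulled-back integrand. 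A point of $\widetilde{C}_\Gamma$ records an angle $s\in S^1$ together with a configuration of i- and f-vertices for the rotated knot $R(s)f$. First I would feed in Assumptions~1 and~2: because $\mathrm{vol}_{S^2}$ is supported near the poles $(0,0,\pm1)$, the integrand is supported on configurations for which every edge $\alpha$ of $\Gamma$ has Gauss image $\varphi_\alpha$ close to $(0,0,\pm1)$, i.e.\ each edge is realized by a nearly vertical chord $R(s)\bigl(f(x_j)-f(x_i)\bigr)$.

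The rotation turns this verticality into a constraint linking the chord direction and the angle: $R(s)\bigl(f(x_j)-f(x_i)\bigr)\parallel(0,0,\pm1)$ forces $f(x_j)-f(x_i)\parallel(0,\sin s,\cos s)$. Under Assumption~2 the knot is essentially planar, so this is met robustly only when the two points project to a crossing of the diagram (equal $x$- and $y$-coordinates, distinct heights), and then the matching angle $s$ is pinned down. This localizes the i-vertex chords onto the crossings of the diagram, and I would then run through the nine graphs $\Gamma_1,\dots,\Gamma_9$, discarding---by the same dimension and fiberwise-symmetry arguments (the involution $\chi$) used in the proofs of Theorem~\ref{thm:nonzero_anomalous} and Theorem~\ref{thm:closed}---all graphs and configurations whose contribution vanishes. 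What should remain is a sum of contributions each supported on configurations in which the relevant i-vertices sit over a prescribed pattern of crossings, the f-vertices and remaining edges being integrated out to produce a definite local weight and sign; these are precisely the collinearity pairs referred to in the introduction.

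The upshot should be that $V(f)$ equals a signed count over (ordered) pairs of crossings forming a fixed linking pattern in the Gauss diagram of $f$---exactly a Polyak--Viro-type two-chord formula. I would then identify this count with Casson's invariant $v_2$ in one of two ways: either by recognizing it directly as one of the standard two-chord Gauss-diagram formulas for $v_2$, or, as a safeguard, by arguing that $V$ is a finite type invariant of degree $\le 2$ (from the behavior of the localized integral under a crossing change), that $V$ vanishes on the unknot (a planar diagram has no crossings, so the integral is empty), and that $V(\text{trefoil})=1$; since $v_2$ is the unique such invariant with this normalization, $V=v_2$. The main obstacle is the localization-and-bookkeeping step: one must show that, after the symmetry and dimension cancellations, precisely the crossing pairs of the $v_2$ formula survive, each with the correct sign and multiplicity, and that all spurious near-pole configurations (for instance chords that become vertical only because two points share an $x$-coordinate, occurring near $s\approx\pm\pi/2$) cancel in pairs or vanish. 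Matching the overall sign and normalization to $v_2$, rather than to $-v_2$ or $2v_2$, will hinge on a careful choice of orientation conventions on the fiber and on $S^1$.
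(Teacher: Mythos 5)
Your overall logic for the corollary itself (a nonzero Kronecker pairing $\pair{I(\Gamma)}{G_f}=v_2(f)\ne 0$ forces $[I(\Gamma)|_{\K{3}(f)}]\ne 0$) matches the paper, but your route to $V=v_2$ has a genuine gap at exactly the step you flag as ``the main obstacle,'' and that obstacle is not a bookkeeping issue---it is the whole difficulty. Assumption~1 does not localize the integral $\pair{I(\Gamma)}{G_f}$ onto crossings of the diagram: an edge contributes near the poles iff the chord $R(s)\bigl(f(x_j)-f(x_i)\bigr)$ is nearly vertical, which only requires the chord to have (nearly) zero $x$-component and the matching angle $s$. Under Assumption~2 this is satisfied by \emph{every} pair of points of the diagram sharing an $x$-coordinate, at $s\approx\pm\pi/2$, not only by pairs lying over a crossing; so your earlier claim that verticality ``is met robustly only when the two points project to a crossing'' is false, as your own parenthetical caveat concedes. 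For a single knot these spurious families contribute genuine local degrees to the integral, there is no involution or dimension argument in sight that kills them (the fiberwise symmetries of Theorems~\ref{thm:nonzero_anomalous} and \ref{thm:closed} concern hidden faces of $\partial C_\Gamma$, not these interior configurations), and the same problem already defeats your claim that $V(\text{unknot})=0$ because ``the integral is empty'': a planar unknot diagram has no crossings but plenty of equal-$x$ pairs. So the proposed Polyak--Viro-type formula for $V(f)$ is not established, and the safeguard characterization argument, as you set it up, still leans on it.

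The paper never evaluates $V(f)$ directly; it evaluates only \emph{finite differences} under crossing changes, which is precisely the mechanism that disposes of the spurious configurations. One computes $D^2V(f)=\sum_{\varepsilon_j=\pm1}\varepsilon_1\varepsilon_2\pair{I(\Gamma)}{G_{f_{\varepsilon_1,\varepsilon_2}}}$ in the limit $h\to 0$: any contribution that stays well-defined and continuous when the switched crossing degenerates to a double point---in particular every contribution from equal-$x$ pairs away from $p_1,p_2$---is identical for $\varepsilon_j=+1$ and $\varepsilon_j=-1$ and cancels in the alternating sum (the ``first observation'' in the proof of Lemma~\ref{lem:13-24}). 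What survives is governed only by the chord pattern of $(p_1,p_2)$: $D^2V=1$ for the pattern of Lemma~\ref{lem:13-24} and $D^2V=0$ otherwise (Lemma~\ref{lem:other_types}), whence $D^3V=0$, so $V$ has order two; $V(\iota)=0$ comes from Hatcher's theorem ($\K{3}(\iota)$ is contractible, so $G_\iota\sim 0$), not from emptiness of any integral; and $V(3_1)=1$ then follows from the difference formula. (The paper also identifies $G_f$ with $r_*\lambda(e,f)$ via the little-cubes action, which is what licenses importing the computation from the $n>3$ case of \cite{K07}, and is needed later for Theorem~\ref{thm:bracket}.) To repair your proposal, drop the direct Gauss-diagram evaluation entirely and run your ``safeguard'' argument with crossing-change differences from the start; then the cancellations you were hoping for are automatic.
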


We will prove two statements which characterize Casson's knot invariant: $V$ is of finite type of order two and $V(3_1)=1$, where $3_1$ is the long trefoil knot.
To do this, we will represent $G_f$ using {\em Browder operation}, as in \cite{K07}.

\subsubsection{Little cubes action}\label{sss:n>3}
Let $\fK{n}$ be the space of {\em framed} long knots in $\R^n$ (embeddings $\tilde{f}:\R^1\times D^{n-1}\hookrightarrow\R^n$ that are standard outside $[-1,1]\times D^{n-1}$).
There is a homotopy equivalence $\Phi :\fK{3}\simeq\K{3}\times\Z$ \cite{Budney03} that maps $\tilde{f}$ to the pair $(\tilde{f}|_{\R^1\times\{(0,0)\}},{\rm fr}\tilde{f})$, where the {\em framing number} ${\rm fr}\tilde{f}$ is defined as the linking number of $\tilde{f}|_{\R^1\times\{(0,0)\}}$ with $\tilde{f}|_{\R^1\times\{(1,0)\}}$.
Since ${\rm fr}\tilde{f}$ is additive under the connected sum, $\Phi$ is a homotopy equivalence of $H$-spaces.
In general, $\fK{n}\simeq\K{n}\times\Omega SO(n-1)$ as $H$-spaces, where $\Omega$ stands for the based loop space functor.

In \cite{Budney03} an action of the {\em little two-cubes operad} on the space $\fK{n}$ was defined.
Its second stage gives a map $S^1\times (\fK{n})^2\to\fK{n}$ up to homotopy, which is given as ``shrinking one knot $f$ and sliding it along another knot $g$ by using the framing, and repeating the same procedure with $f$ and $g$ exchanged'' (see \cite[Figure 2]{Budney03}).
Fixing a generator of $H_1(S^1)$, we obtain the {\em Browder operation} $\lambda :H_p(\fK{n})\otimes H_q (\fK{n})\to H_{p+q+1}(\fK{n})$, which is a graded Lie bracket satisfying the Leibniz rule with respect to the product induced by the connected sum.
The author proved in \cite{K07} that $\pair{I(\Gamma )}{r_* \lambda (e,v)}=1$ when $n>3$ is odd, where $r:\fK{n}\to\K{n}$ is the forgetting map, $e\in H_{n-3}(\fK{n})$ comes from the space of framings, and $v\in H_{2(n-3)}(\fK{n})$ is the first nonzero class of $\K{n}$ represented by a map $(S^{n-3})^{\times 2}\to\K{n}$ (see below).

\subsubsection{The case $n=3$}\label{sss:n=3}
In \cite{K07} the assumption $n>3$ was used only to deduce the closedness of $I(\Gamma )$ from the results of \cite{CCL02}.
The cycles $e$ and $v$ are defined even when $n=3$:
\begin{itemize}
\item
 Under the homotopy equivalence $\fK{3}\simeq\K{3}\times\Z$, the zero-cycle $e$ is given by $(\iota ,1)$ where $\iota$ is the trivial long knot ($\iota (t)=(t,0,0)$ for any $t\in \R^1$).
\item
 The zero-cycle $v=v(T)$ is given by $\sum_{\varepsilon_i =\pm 1}\varepsilon_1\varepsilon_2T_{\varepsilon_1,\varepsilon_2}$, where $T=3_1$ and $T_{\varepsilon_1,\varepsilon_2}$ is $T$ with its crossing $p_i$, for $i=1,2$ changed to be positive if $\varepsilon_i=+1$ and negative if $\varepsilon_i=-1$ (see Figure \ref{fig:trefoil}).
\begin{figure}
\includegraphics{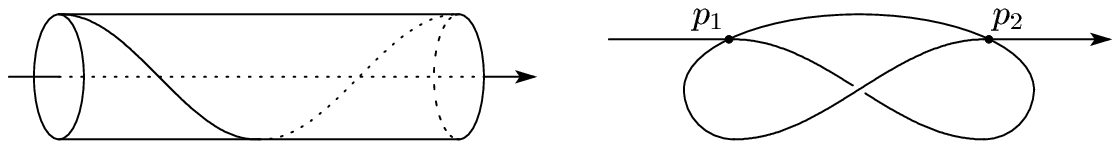}
\caption{The cycles $e$ and $v=v(T)$}
\label{fig:trefoil}
\end{figure}
\end{itemize}
Notice that, for any $f\in\K{3}$ and any pair $(p_1,p_2 )$ of its crossings, an analogous zero-cycle $v=v(f;p_1,p_2)$ can be defined.

Regard $f\in\K{3}$ as a zero-cycle of $\fK{3}$ (with ${\rm fr}f=0$) and consider $r_*\lambda (e,f)$.
During a knot $f$ ``going through'' $e$, $f$ rotates once around $x$-axis.
Thus the one-cycle $r_*\lambda (e,f)$ is homologous to the Gramain cycle $G_f$.
This leads us to the fact that, for $v=v(f;p_1,p_2 )$, the one-cycle $r_*\lambda (e,v)$ is homologous to the sum $\sum_{\varepsilon_i=\pm 1}\varepsilon_1\varepsilon_2G_{f_{\varepsilon_1,\varepsilon_2}}$.
This is why we can apply the method in \cite{K07} to compute
\[
 D^2 V (f) := \sum_{\varepsilon_j=\pm 1}\varepsilon_1 \varepsilon_2 V(f_{\varepsilon_1 ,\varepsilon_2})
 =\sum_{\varepsilon_j=\pm 1}\varepsilon_1 \varepsilon_2 \pair{I(\Gamma )}{G_{f_{\varepsilon_1 ,\varepsilon_2}}}
 =\pair{I(\Gamma )}{r_*\lambda (e,v(f))}.
\]
Recall that our graph cocycle $\Gamma$ is a sum of nine graphs $\Gamma_1,\dots ,\Gamma_9$ (see Figure~\ref{fig:Gamma}).
By Assumption 1, the integration $\pair{I(\Gamma_i)}{G_f}$ can be computed by ``counting'' the configurations with all the images of the Gauss maps corresponding to edges of $\Gamma_i$ being around the poles of $S^2$.
Lemma~\ref{lem:13-24} below was proved in such a way in \cite{K07} when $n>3$.
Since $[v(f)]\in H_0(\K{3}(f))$ is independent of small $h>0$ (see Assumption 2), we may compute $D^2V(f)$ in the limit $h\to 0$.

\begin{definition}
We say that the pair $(p_1,p_2)$ of crossings of $f$ {\em respects the diagram} \fig{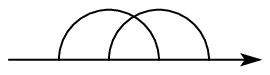} if there exist $t_1 <t_2 <t_3 <t_4$ where $f(t_1 )$ and $f(t_3 )$ correspond to $p_1$, while $f(t_2 )$ and $f(t_4 )$ correspond to $p_2$.
The notion of $(p_1,p_2)$ respecting \fig{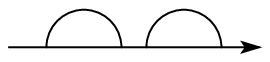} or \fig{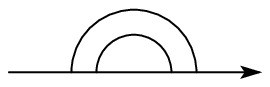} is defined analogously.\qed
\end{definition}

\begin{lem}[\cite{K07}]\label{lem:13-24}
Suppose that $(p_1,p_2)$ respects \fig{13-24.eps}.
Then, in the limit $h \to 0$, $P_i (f):=\sum_{\varepsilon_j =\pm 1}\varepsilon_1 \varepsilon_2 \pair{I(\Gamma_i )}{G_{f_{\varepsilon_1 ,\varepsilon_2}}}$ converges to zero for $i\ne 2$, and $P_2 (f)$ converges to $1$.
Thus $D^2 V(f)=1$.
\end{lem}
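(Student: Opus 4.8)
The plan is to compute each $P_i(f)$ by writing $\pair{I(\Gamma_i)}{G_{f_{\varepsilon_1,\varepsilon_2}}}$ as a configuration space integral and then ``counting'' its localized contributions as $h\to 0$, exactly in the spirit of the $n>3$ computation of \cite{K07}. First I would pull back the construction of \S\ref{ss:CSI} along the Gramain cycle: since $I(\Gamma_i)=(\pi_{\Gamma_i})_*\omega_{\Gamma_i}$ is a fibre integral, the Kronecker pairing $\pair{I(\Gamma_i)}{G_f}$ equals the integral of $\omega_{\Gamma_i}$ over the pullback bundle $G_f^*C_{\Gamma_i}$, which fibres over the Gramain circle $S^1=\R/2\pi\Z$ with fibre a compactified configuration space on the rotating knot. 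A degree count shows $\omega_{\Gamma_i}$ is a top form there, so the pairing is a signed count of configurations; Theorem~\ref{thm:closed} guarantees $I(\Gamma)$ is closed, so this count is invariant under the homotopies of \S\ref{ss:Gramain} and may be evaluated after passing to the planar diagram of Assumption~2 and the limit $h\to 0$.

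Next I would localize the integrand. By Assumption~1 the form $\mathrm{vol}_{S^2}$ is supported near the poles $(0,0,\pm1)$, so only configurations in which every Gauss map $\varphi_\alpha$ points nearly vertically contribute. For the nearly planar knot of Assumption~2 a nearly vertical edge forces its two endpoints to project to almost the same point of the $xy$-plane; as the knot is rotated by the Gramain angle $s$, such an edge can occur only at the over/under strands of a crossing (near $s=0$ and $s=\pi$, giving the north and south poles) or at a pair of points sharing a common $x$-coordinate. The key simplification is the second difference $\sum_{\varepsilon_j}\varepsilon_1\varepsilon_2$, which annihilates any localized contribution not genuinely depending on both crossing signs; in particular the generic ``equal-$x$'' edges drop out, and only configurations in which the graph is pinned simultaneously at both crossings $p_1$ and $p_2$ survive. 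The interleaving hypothesis then fixes the linear order in which the corresponding i-vertices sit along the knot.

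With this in hand I would examine the nine graphs $\Gamma_1,\dots,\Gamma_9$ one at a time. For each I would enumerate the doubly-pinned configurations compatible with the prescribed order of strands, and compute the resulting sign from the orientation of the localized configuration space together with the pole (north or south) attached to each vertical Gauss map, the latter being determined by the over/under data and the signs $\varepsilon_i$. For $i\ne 2$ I expect the total to vanish, either because no admissible doubly-pinned configuration exists, or because the surviving integrand is killed by an orientation-reversing fibrewise involution of the kind used for $\chi$ in Example~\ref{ex:3pts}, or because a dimensional argument forces the contribution to zero. For $i=2$ I expect exactly one admissible configuration, with a direct sign computation giving $P_2(f)\to1$. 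Summing yields $D^2V(f)=\sum_i P_i(f)\to1$.

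The hard part will be the third step: simultaneously proving that the enumeration is complete (no admissible configuration is overlooked, in particular none arising from f-vertices drifting off the diagram or from the equal-$x$ mechanism happening to coincide with a crossing) and carrying the orientation and pole-sign bookkeeping through so that the vanishing for $i\ne2$ and the normalization $P_2\to1$ both come out correctly. I would control this by matching the local model to the already-understood $n>3$ computation of \cite{K07}, where the closedness of $I(\Gamma)$ was automatic, using Theorem~\ref{thm:closed} to license the same comparison at $n=3$.
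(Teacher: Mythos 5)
Your proposal is correct and takes essentially the same route as the paper: pulling back $C_{\Gamma_i}$ along the Gramain cycle, using the second difference together with Assumption 1 to discard every contribution that remains well-defined and continuous at $h=0$ (the paper formalizes your ``doubly-pinned'' locus as the subspace $C'_{\Gamma_i}$, defined via edge-paths joining the over- and under-arcs of each $p_j$, and its two ``observations''), then running a graph-by-graph analysis with the $\Gamma_2$ normalization deferred to the $n>3$ computation of \cite{K07}, exactly as the paper does via \cite[Lemma~4.6]{K07}. The only mismatch is in your list of anticipated vanishing mechanisms: in the paper every case $i\ne 2$ is killed either because no doubly-pinned configuration exists at all ($i=7,8,9$, since a graph with at most three i-vertices cannot connect both crossings) or because some Gauss map is forced outside the support of ${\rm vol}_{S^2}$ ($i=1,3,4,5,6$) --- that is, by your admissibility-failure mechanism alone; no orientation-reversing involution or dimensional argument is needed.
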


\begin{proof}[Outline of proof]
Let $\hat{C}_{\Gamma_i}\to S^1$ be the pullback of $C_{\Gamma_i}\to\K{3}$ via $G_f$, and let $\hat{G}_f:\hat{C}_{\Gamma_i}\to C_{\Gamma_i}$ be the lift of $G_f$.
By the properties of pullbacks and fiber-integrations,
\begin{equation}\label{eq:evaluation}
 P_i (f) = \sum_{\varepsilon_i =\pm 1} \varepsilon_1 \varepsilon_2 \int_{\hat{C}_{\Gamma_i}}
 \hat{G}^*_{f_{\varepsilon_1 ,\varepsilon_2}} \omega_{\Gamma_i}.
\end{equation}
Let $t_1<\dots <t_4$ be such that $f(t_1)$ and $f(t_3)$ correspond to $p_1$, while $f(t_2)$ and $f(t_4)$ correspond to $p_2$.
Define the subspace $C'_{\Gamma_i}\subset\hat{C}_{\Gamma_i}$ as consisting of $(G_f (s);(x_j ))$ ($s\in S^1$) such that, for each $j=1,2$, there is a pair $(l,m)$ of i-vertices of $\Gamma_i$ such that $x_l$ is on the over-arc of $p_j$, $x_m$ is on the under-arc of $p_j$, and there is a sequence of edges in $\Gamma_i$ from $l$ to $m$.

{\em First observation}:
The integration over $\hat{C}_{\Gamma_i}\setminus C'_{\Gamma_i}$ does not essentially contribute to $P_i(f)$ in the limit $h\to 0$.
This is because, over $\hat{C}_{\Gamma_i}\setminus C'_{\Gamma_i}$, the integrals in \eqref{eq:evaluation} are well-defined and continuous even when $h=0$ ($p_j$ becomes a double point), so two terms in $P_i (f)$ corresponding to $\varepsilon_j=\pm 1$ cancel each other.
This implies $\lim_{h\to 0}P_i(f) =0$ for $i=7,8,9$, since $C'_{\Gamma_i} =\emptyset$ if $\sharp\{\text{i-vertices}\}\le 3$.

{\em Second observation}:
Consider the configurations $(x_i)\in C'_{\Gamma_i}$ such that, for any pair $(l,m)$ of i-vertices of $\Gamma_i$ with $x_l$ on the over-arc of $p_j$ and $x_m$ on the under-arc of $p_j$, all the points $x_k$ ($k$ is in a sequence in $\Gamma_i$ from $l$ to $m$) are not near $p_j$.
Such configurations also do not essentially contribute to $P_i(f)$ in the limit $h\to 0$, by the same reason as above.
This implies $\lim_{h\to 0}P_i (f)=0$ for $i=4,5,6$;
the configurations $(x_l)\in C'_{\Gamma_i}$ ($4\le i\le 6$) must be such that the point $x_l\in \R^1$ ($1\le l\le 4$) is near $t_l$.
By the second observation, the ``free point'' $x_5$ must be near $p_1$ or $p_2$.
But then $\omega_{\Gamma_i}=0$, since at least one Gauss map $\varphi_{l5}$ has its image outside the support of ${\rm vol}_{S^2}$ (see Assumption 1).
Thus $\lim_{h\to 0}P_i (f)=0$.

Finally consider $P_i (f)$ for $i=1,2,3$.
For $i=1$ we have $\omega_{\Gamma_i}=0$ over $C'_{\Gamma_i}$, since the Gauss map corresponding to the edge $12$ has its image outside of the support of ${\rm vol}_{S^2}$.
The same reasoning, using the loop edge $11$, shows that $\omega_{\Gamma_3}=0$ over $C'_{\Gamma_i}$.
Only $P_2 (f)$ survives, since the configurations with $x_1$ near $t_1$, $x_2$ near $t_2$, $x_3$ and $x_4$ near $t_3$, and $x_5$ near $t_4$, contribute nontrivially to the integral (see \cite[Lemma~4.6]{K07} for details).
\end{proof}

\begin{lem}\label{lem:other_types}
If $(p_1,p_2)$ respects \fig{12-34.eps} or \fig{14-23.eps}, then $D^2 V(f)=0$.
\end{lem}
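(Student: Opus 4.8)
The plan is to reuse the scheme of the proof of Lemma~\ref{lem:13-24} as far as possible, exploiting that its two preliminary reductions do not depend on how $p_1$ and $p_2$ are interleaved. I would again expand $D^2V(f)=\sum_{i=1}^{9}P_i(f)$ as in \eqref{eq:evaluation} and apply the \emph{First} and \emph{Second observations}: each uses only the continuity of the integrals as $h\to0$ (which forces the two terms $\varepsilon_j=\pm1$ to cancel) together with the support condition of Assumption~1, and is therefore insensitive to the type of $(p_1,p_2)$. This removes $i=7,8,9$, where $C'_{\Gamma_i}=\emptyset$. The loop graph $\Gamma_3$ is also removed for every type, since the Gauss map of the loop edge $11$ is the unit tangent of $f$, which by Assumption~2 is nearly horizontal and hence lies off the support of ${\rm vol}_{S^2}$.

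The mechanism behind the remaining vanishings is that, under Assumptions~1 and~2, an edge's Gauss map can approach a pole $(0,0,\pm1)$ only when its two endpoints lie on the over- and under-arcs of a single crossing; endpoints at distinct crossings are separated horizontally and give a nearly equatorial direction. Hence a configuration in $C'_{\Gamma_i}$ contributes in the limit $h\to0$ only if the i-vertices cluster near $f(t_1),\dots,f(t_4)$ so that their left-to-right order is respected, each connected component of $\Gamma_i$ lands over a single crossing, and both crossings are bridged by i-vertices. For $\Gamma_2$ the surviving clustering of Lemma~\ref{lem:13-24} distributes its vertices as the groups $\{1,3,4\}$ over $p_1$ and $\{2,5\}$ over $p_2$; I would check that these groups admit no order-compatible placement bridging both crossings of the sequential diagram \fig{12-34.eps} or of the nested diagram \fig{14-23.eps}, so that the contributing locus is empty and $\lim_{h\to0}P_2(f)=0$. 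The same coverage bookkeeping disposes of the free-vertex graphs $\Gamma_4,\Gamma_5,\Gamma_6$, and of $\Gamma_1$ against \fig{14-23.eps}, where the endpoints of the edge $12$ land on two different crossings.

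The crux, and the one case not settled by these formal reductions, is $\Gamma_1$ against the sequential diagram \fig{12-34.eps}. There the endpoints $1,2$ of the edge $12$ may both sit over the single crossing $p_1$, so $\varphi_{12}$ is no longer forced off-support and the locus for $P_1$ is genuinely nonempty, with the remaining vertices bridging $p_2$. I would show that the signed count defining $P_1(f)$ vanishes all the same: summing over the four resolutions $\varepsilon_1,\varepsilon_2=\pm1$, and using that flipping $\varepsilon_j$ reverses the relevant vertical direction while ${\rm vol}_{S^2}$ is antisymmetric, realizes the local linking of the two chords, which is zero precisely because the sequential arrangement is unlinked. Carrying out this local degree computation in the limit $h\to0$, in the spirit of \cite[Lemma~4.6]{K07}, is the main technical obstacle.

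As an independent check on this last step, one may use that $D^2V(f)$ depends only on the arrangement type of $(p_1,p_2)$, as Lemma~\ref{lem:13-24} already shows for the linked type, and evaluate it on a model knot in which all four resolutions $f_{\varepsilon_1,\varepsilon_2}$ are isotopic; for instance, an unknot carrying two sequential Reidemeister-I kinks realizes \fig{12-34.eps} and makes $D^2V(f)=(\mathrm{const})\sum_{\varepsilon_j}\varepsilon_1\varepsilon_2=0$ at once. Either route gives $D^2V(f)=0$ for both \fig{12-34.eps} and \fig{14-23.eps}, completing the proof.
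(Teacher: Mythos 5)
Your reductions for $i=2,\dots,9$ and for $\Gamma_1$ against \fig{14-23.eps} follow the paper's route (restrict to $C'_{\Gamma_i}$ by the two observations, then order/support bookkeeping), but the case you yourself identify as the crux --- $\Gamma_1$ against the sequential diagram \fig{12-34.eps} --- is exactly where your proposal has a genuine gap, and it stems from overlooking part of the edge set of $\Gamma_1$. On the nonempty locus you describe (in the paper's coordinates: $x_j$ near $t_j$ for $j=1,2,3$ and $x_5$ near $t_4$, so that the edges $12$ and $35$ bridge the over/under arcs of $p_1$ and $p_2$ respectively), the graph $\Gamma_1$ still carries the edge $14$, which joins the cluster at $p_1$ to a point whose parameter lies between $t_3$ and $t_4$. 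Since the two crossings are horizontally separated (Assumption 2), the Gauss map $\varphi_{14}$ stays nearly equatorial, hence outside the support of ${\rm vol}_{S^2}$ (Assumption 1), and therefore $\omega_{\Gamma_1}$ vanishes identically on the contributing locus. This is the paper's one-line resolution: a pointwise vanishing of the integrand by exactly the off-support mechanism you use everywhere else; no cancellation among the four resolutions is needed at all.

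The two substitutes you propose do not close the gap. The ``local linking of the two chords'' cancellation is only a heuristic: nothing in the construction identifies $\lim_{h\to 0}P_1(f)$ with a linking number of chords, and you give no argument that the four signed local integrals assemble into such a degree. The fallback check is circular: the assertion that $D^2V(f)$ depends only on the arrangement type of $(p_1,p_2)$ is precisely what the limit computation is supposed to establish; Lemma~\ref{lem:13-24} proves such independence only for the type \fig{13-24.eps}, so you cannot invoke it for \fig{12-34.eps} and then evaluate on an unknot with two sequential kinks. Replacing both arguments by the observation about the edge $14$ repairs the proof, and the rest of your bookkeeping then agrees with the paper's.
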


\begin{proof}
For $i=4,\dots ,9$, we see in the same way as in Lemma \ref{lem:13-24} that $P_i(f)$ approaches $0$ as $h\to 0$.
That $\lim_{h\to 0}P_i(f)$ for $i=2,3$ and the \fig{14-23.eps}-case for $i=1$ is proved by the first observation in the proof of Lemma~\ref{lem:13-24}.

In the \fig{12-34.eps}-case for $P_1(f)$ over $C'_{\Gamma_1}$, only the configurations with $x_j$ near $t_j$, with $j=1,2,3$, and $x_5$ near $t_4$ may essentially contribute to $P_1(f)$; in this case the edges $12$ and $35$ join the over/under arcs of $p_1$ and $p_2$ respectively.
However, the Gauss map $\varphi_{14}$ cannot have its image in the support of ${\rm vol}_{S^2}$, so $\omega_{\Gamma_1}$ vanishes.
\end{proof}

\begin{proof}[Proof of Theorem~\ref{thm:Gramain}]
For three crossings $(p_1,p_2,p_3)$ of $f\in\K{3}$, consider the third difference
\[
 D^3V(f)
 :=\sum_{\varepsilon_j=\pm 1}\varepsilon_1\varepsilon_2\varepsilon_3 V(f_{\varepsilon_1,\varepsilon_2,\varepsilon_3})
 =D^2V(g_{+1})-D^2V(g_{-1})
\]
where $g_{\pm 1}:=f_{+1,+1,\pm 1}$ and $D^2V(g_{\pm 1})$ are taken with respect to $(p_1,p_2)$.
Since the pair $(p_1,p_2)$ of $g_{+1}$ respects the same diagram as $(p_1,p_2)$ of $g_{-1}$, we have $D^2V(g_{+1})=D^2V(g_{-1})$ by above Lemmas~\ref{lem:13-24}, \ref{lem:other_types}.
Thus $D^3V=0$ and hence $V$ is finite type of order two.
Moreover $V(\iota )=0$ for the trivial long knot $\iota$ since $\K{3}(\iota )$ is contractible \cite{Hatcher83}; therefore $G_{\iota}\sim 0$, and $V(3_1)=1$ by Lemma~\ref{lem:13-24} and $V(\iota )=0$.
These properties uniquely characterize Casson's knot invariant $v_2$.
\end{proof}

\subsection{The Browder operations}\label{subsec:bracket}
We denote a framed long knot corresponding to $(f,k)$ under the equivalence $\fK{3}\simeq\K{3}\times\Z$ by $f^k\in\fK{3}$ (unique up to homotopy).
As mentioned above, the Gramain cycle can be written as $[G_f]=[r_*\lambda (f^k,\iota^1)]$ ($k$ may be arbitrary).
Below we will evaluate $I(\Gamma )$ on more general cycles $r_*\lambda (f^k,g^l)$ of $\K{3}$ for any nontrivial $f,g\in\K{3}$ and $k,l\in\Z$.
This generalizes Theorem~\ref{thm:Gramain}.

\begin{thm}\label{thm:bracket}
We have $\pair{I(\Gamma )}{r_*\lambda (f^k,g^l)}=lv_2(f)+kv_2(g)$ for any $f,g\in\K{3}$ and $k,l\in\Z$.
\end{thm}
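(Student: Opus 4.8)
The plan is to reduce Theorem~\ref{thm:bracket} to Theorem~\ref{thm:Gramain} by exploiting the bilinearity of the Browder operation $\lambda$ together with the additivity of the framing number and of the invariants involved. First I would observe that the cycle $r_*\lambda(f^k,g^l)$ is, up to homotopy, assembled from the little-cubes action described in \S\ref{sss:n>3}: the second stage of the operad action slides one knot along the other using the framing, and repeats with the roles exchanged. The antisymmetry and the graded-Leibniz properties of $\lambda$ (it is a graded Lie bracket satisfying the Leibniz rule with respect to the connected-sum product) should let me decompose $\lambda(f^k,g^l)$ into a combination of the basic pieces $\lambda(f^k,\iota^1)$ and $\lambda(\iota^1,g^l)$, which are precisely the Gramain-type cycles already understood.

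The key computation is to keep careful track of the two integers $k$ and $l$. The framing number is additive under connected sum, so $f^k$ is, as an $H$-space element, the connected sum of an unframed long knot $f$ (regarded as $f^0$) with $k$ copies of the framing generator; symbolically $f^k=f\cdot\iota^k$ under $\fK{3}\simeq\K{3}\times\Z$. The second step is then to expand $\lambda(f\cdot\iota^k,g\cdot\iota^l)$ via the Leibniz rule. Because $\lambda$ pairs a framing class with a knot class to produce (after applying $r_*$) a Gramain-type rotation cycle, and because $v_2$ is additive under connected sum while vanishing on the pure framing classes, the cross terms collect into exactly the cycles whose $I(\Gamma)$-pairing Theorem~\ref{thm:Gramain} computes. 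Concretely, I expect $\lambda(f^k,\iota^1)$ to evaluate against $I(\Gamma)$ as $v_2(f)$ (this is the content of Theorem~\ref{thm:Gramain}, since $r_*\lambda(e,f)\sim G_f$ and $e=\iota^1$), and the integer $l$ appearing in $g^l$ to contribute a multiplicity through the framing slides, yielding the asymmetric coefficients $l\,v_2(f)+k\,v_2(g)$.

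The main obstacle will be justifying the precise bookkeeping of the two framing integers and confirming that the mixed bracket term $\lambda(f,g)$ (with both arguments nontrivial knots and zero framing) contributes nothing to the pairing. For the latter I would argue that $r_*\lambda(f,g)$, built by sliding $f$ along $g$ without using a rotation of the ambient framing, carries no net rotation around the long axis, so the configurations that nontrivially feed the integrals $\pair{I(\Gamma_i)}{\,\cdot\,}$ (those detecting a full rotation, as in the Gramain analysis of Lemma~\ref{lem:13-24}) never appear; hence this term integrates to zero. The appearance of $l$ rather than $k$ in the coefficient of $v_2(f)$ reflects that sliding $f$ once through each of the $l$ framing twists of $g$ produces $l$ copies of the Gramain rotation of $f$, and symmetrically for the other term.

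Once these two points are settled I would combine them:
\begin{align*}
 \pair{I(\Gamma)}{r_*\lambda(f^k,g^l)}
 &=\pair{I(\Gamma)}{r_*\lambda(f\cdot\iota^k,\,g\cdot\iota^l)}\\
 &=l\pair{I(\Gamma)}{r_*\lambda(f,\iota^1)}+k\pair{I(\Gamma)}{r_*\lambda(\iota^1,g)}\\
 &=l\,v_2(f)+k\,v_2(g),
\end{align*}
where the middle equality uses bilinearity of $\lambda$ and the vanishing of the pure-knot bracket term, and the last uses Theorem~\ref{thm:Gramain} applied to $f$ and to $g$ respectively. I expect the delicate part to be a clean homotopy-theoretic derivation of the middle step rather than any new configuration-space integral estimate, since all the hard analytic vanishing has already been carried out in the Gramain case.
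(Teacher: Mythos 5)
Your overall strategy --- expand $\lambda(f^k,g^l)$ by the Leibniz rule and reduce to the Gramain case --- coincides with the paper's first reduction step, but there are two genuine gaps in your proposal, and they sit exactly where the real work lies.

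First, the Leibniz expansion does not produce the bare cycles $\lambda(f^0,\iota^l)$ and $\lambda(\iota^k,g^0)$; it produces connected sums
\[
 \lambda(f^0,g^0)\sharp\iota^{k+l}+\lambda(f^0,\iota^l)\sharp g^k+\lambda(\iota^k,g^0)\sharp f^l+\lambda(\iota^k,\iota^l)\sharp f^0\sharp g^0,
\]
so after applying $r_*$ you must evaluate $I(\Gamma)$ on cycles of the form $G_f\sharp g$ and $f\sharp G_g$, not on $G_f$ and $G_g$ themselves. In your final display the $\sharp g^k$ and $\sharp f^l$ factors silently disappear. Justifying $\pair{I(\Gamma)}{G_f\sharp g}=\pair{I(\Gamma)}{G_f}=v_2(f)$ requires knowing that $[I(\Gamma)]$ is primitive for the coproduct $\Delta$ on $H^*_{DR}(\K{3})$ induced by $\sharp$. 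This is the paper's Lemma~\ref{lem:coprod}, and it is not formal: the graph-complex coproduct of $\Gamma$ contains extra cross terms $\Gamma'\otimes\Gamma''+\Gamma''\otimes\Gamma'$, and one must check that $\Gamma'=\delta\Gamma_0$ and that $I(\Gamma')=dI(\Gamma_0)$ is exact (using that $\pi_{\Gamma_0}$ has no hidden faces) in order to kill them in cohomology. Your proposal never touches this point.

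Second, and more seriously, your argument for the vanishing of the mixed term $\pair{I(\Gamma)}{r_*\lambda(f^0,g^0)}$ --- that this cycle ``carries no net rotation around the long axis,'' so no contributing configurations appear --- is not a proof, and the heuristic behind it is misleading. $I(\Gamma)$ is not a priori sensitive only to rotation; the cycle $r_*\lambda(f^0,g^0)$ slides $f$ through $g$ and then $g$ through $f$, and nontrivial contributions genuinely threaten to appear: when the shrunken knot passes near a crossing of the other knot (Figure~\ref{fig:contribution} of the paper), and when all configuration points concentrate near the sliding knot, a contribution which depends on the framing number and vanishes only because ${\rm fr}\,g^0=0$. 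The paper's entire proof of Theorem~\ref{thm:bracket} consists of controlling these: it fixes $g$, sets $V_g(f):=\pair{I(\Gamma)}{r_*\lambda(f^0,g^0)}$, shows $D^2V_g(f)=0$ for every pair of crossings by configuration-space estimates in the limit $h\to 0$ (splitting the $S^1$-parameter into the two sliding phases and reusing the two observations from the proof of Lemma~\ref{lem:13-24}), and then concludes $V_g\equiv 0$ by the finite-type argument. Your expectation that the delicate part is ``a clean homotopy-theoretic derivation \dots rather than any new configuration-space integral estimate'' is exactly backwards: that new estimate is the content of the paper's proof.
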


\begin{cor}\label{cor:nonzero2}
If at least one of $v_2(f)$ and $v_2(g)$ is not zero, then
\[
 [I(\Gamma )|_{\K{3}(f\sharp g)}]\in H^1_{DR}(\K{3}(f\sharp g))\ne 0,
\]
where $\sharp$ stands for the connected sum.
\end{cor}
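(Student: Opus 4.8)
The plan is to read off the corollary from Theorem~\ref{thm:bracket} by exhibiting a single one-cycle of the component $\K{3}(f\sharp g)$ on which $I(\Gamma)$ pairs nontrivially. The first and most delicate step is to check that the one-cycle $r_*\lambda(f^k,g^l)$ really does lie in $\K{3}(f\sharp g)$. This is where the geometry of the little two-cubes action enters: the second-stage map $S^1\times(\fK{3})^2\to\fK{3}$ keeps the isotopy class of the resulting framed knot constant as the parameter runs over $S^1$, namely equal to the $H$-space product $f^k\cdot g^l$. Since this product corresponds under $\fK{3}\simeq\K{3}\times\Z$ to the connected sum (as recorded in the discussion of \cite{Budney03} above), the underlying long knot of every member of the loop is $f\sharp g$, and applying the forgetting map $r$ keeps the entire loop inside $\K{3}(f\sharp g)$.

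Granting this, the rest is formal. By Theorem~\ref{thm:bracket},
\[
 \pair{I(\Gamma)}{r_*\lambda(f^k,g^l)}=l\,v_2(f)+k\,v_2(g),
\]
and because $v_2$ is integer-valued I am free to pick $k,l\in\Z$ making the right-hand side nonzero: if $v_2(f)\ne 0$ take $(k,l)=(0,1)$, so that the pairing equals $v_2(f)$; if instead $v_2(g)\ne 0$ take $(k,l)=(1,0)$, so that it equals $v_2(g)$. Thus under the hypothesis that at least one of $v_2(f),v_2(g)$ is nonzero we obtain a one-cycle of $\K{3}(f\sharp g)$ against which $I(\Gamma)$ integrates to a nonzero number.

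Finally I would invoke Theorem~\ref{thm:closed}, which guarantees that the restriction $I(\Gamma)|_{\K{3}(f\sharp g)}$ is a closed one-form; the Kronecker pairing computed above is precisely the integral of this form over the chosen cycle. A closed form whose integral over some one-cycle is nonzero cannot be exact, so $[I(\Gamma)|_{\K{3}(f\sharp g)}]\in H^1_{DR}(\K{3}(f\sharp g))$ is nonzero. The one genuine obstacle is the component bookkeeping in the first paragraph: one must be sure that $r_*\lambda(f^k,g^l)$ represents a class of $H_1(\K{3}(f\sharp g))$ and not of a different component, which reduces to the compatibility of the little two-cubes product with the connected sum together with the $H$-space equivalence $\fK{3}\simeq\K{3}\times\Z$. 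Once that is secured the corollary follows at once.
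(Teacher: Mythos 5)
Your proposal is correct and follows essentially the same route as the paper: the paper's proof likewise observes that $r_*\lambda(f^k,g^l)$ is a one-cycle of $\K{3}(f\sharp g)$ and then invokes Theorem~\ref{thm:bracket} to choose $k,l$ with $lv_2(f)+kv_2(g)\ne 0$. Your explicit choices $(k,l)=(0,1)$ or $(1,0)$ and the spelled-out justification of the component bookkeeping and the closed-form/period argument are just more detailed versions of what the paper leaves implicit.
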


\begin{proof}
This is because $r_*\lambda (f^k,g^l)$ is a one-cycle of $\K{3}(f\sharp g)$ for any $k,l\in\Z$.
Since $v_2(f)$ or $v_2(g)$ is not zero, there exist some $k,l$ such that $lv_2(f)+kv_2(g)\ne 0$, so $\pair{I(\Gamma )}{r_*\lambda (f^k,g^l)}\ne 0$ by Theorem~\ref{thm:bracket}.
\end{proof}

\begin{rem}
If $v_2(f)=-v_2(g)$, then $v_2(f\sharp g)=0$ since it is known that $v_2$ is additive under $\sharp$.
Hence we cannot deduce $[I(\Gamma )|_{\K{3}(f\sharp g)}]\ne 0$ from Corollary~\ref{cor:nonzero1}.
Moreover if $v_2(f)=-v_2(g)\ne 0$, then Corollary~\ref{cor:nonzero2} implies $[I(\Gamma )|_{\K{3}(f\sharp g)}]\ne 0$.\qed
\end{rem}

To prove Theorem~\ref{thm:bracket}, first we remark that $f^m\sim f^0\sharp\iota^m$.
Since $\lambda$ satisfies the Leibniz rule, $\lambda (f^k,g^l)$ is homologous to
\[
 \lambda(f^0 ,g^0 )\sharp\iota^{k+l}+\lambda(f^0 ,\iota^l )\sharp g^k+\lambda(\iota^k ,g^0 )\sharp f^l+\lambda(\iota^k,\iota^l)\sharp f^0\sharp g^0.
\]
Since by definition $r_*\lambda (f^k,\iota^m)\sim mG_f$ ($k,m\in\Z$) and $G_{\iota}\sim 0$,
\begin{equation}\label{eq:leibniz}
 r_*\lambda (f^k,g^l)\sim r_*\lambda (f^0,g^0)+lG_f\sharp g+kf\sharp G_g.
\end{equation}

Notice that $\sharp$ makes $\K{3}$ an $H$-space and induces a coproduct $\Delta$ on $H^*_{DR}(\K{3})$.

\begin{lem}\label{lem:coprod}
$\Delta ([I(\Gamma )])=1\otimes [I(\Gamma )]+[I(\Gamma )]\otimes 1\in H^*_{DR}(\K{3})^{\otimes 2}$.
\end{lem}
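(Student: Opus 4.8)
The plan is to compute the coproduct directly at the level of differential forms, using a geometric model of the connected sum in which the two summands are pushed far apart along the long axis, and then to invoke Assumption~1. Recall that $\Delta$ is induced by the connected-sum map $\sharp\colon\K{3}\times\K{3}\to\K{3}$, so $\Delta[I(\Gamma)]$ is represented by $\sharp^*I(\Gamma)$. By the definition of $I(\Gamma)$ as a fiber integral, $\sharp^*I(\Gamma)$ is again a fiber integral, whose fiber over $(f,g)$ is the configuration space $C_{\Gamma}$ associated with the knot $f\sharp g$. Since $\sharp$ is defined only up to homotopy, I am free to use any convenient representative, and I would use the model in which $f$ is rescaled into a small ball near the origin and $g$ into a small ball near $(L,0,0)$ with $L$ large, the two joined by a straight segment of the $x$-axis. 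With real coefficients K\"unneth gives
\[
 H^1_{DR}(\K{3}\times\K{3})\cong\bigl(H^1_{DR}(\K{3})\otimes H^0_{DR}(\K{3})\bigr)\oplus\bigl(H^0_{DR}(\K{3})\otimes H^1_{DR}(\K{3})\bigr),
\]
and $\pi_1^*I(\Gamma)$ and $\pi_2^*I(\Gamma)$ represent $[I(\Gamma)]\otimes 1$ and $1\otimes[I(\Gamma)]$; so it suffices to establish the identity of closed forms
\[
 \sharp^*I(\Gamma)=\pi_1^*I(\Gamma)+\pi_2^*I(\Gamma).
\]

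The key step will be a localization of this fiber integral. Each of the nine graphs $\Gamma_i$ is connected, so any configuration contributing to $\omega_{\Gamma_i}$ whose vertices lie in both the $f$-ball and the $g$-ball must contain an edge $\alpha$ joining the two balls. As $L\to\infty$ the two endpoints of $\alpha$ differ essentially only in their $x$-coordinates, so $\varphi_\alpha$ takes values near the equatorial points $(\pm 1,0,0)\in S^2$; the same holds for any edge incident to a vertex on the straight connecting segment, where the tangent direction is $(1,0,0)$. By Assumption~1 the support of ${\rm vol}_{S^2}$ lies in a small neighborhood of the poles $(0,0,\pm 1)$, so for $L$ large $\varphi_\alpha^*{\rm vol}_{S^2}=0$ there and hence $\omega_{\Gamma_i}=0$ on such configurations. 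Thus only configurations whose vertices lie entirely in the $f$-ball, or entirely in the $g$-ball, can contribute.

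Next I would identify the two surviving pieces. A configuration with all vertices in the $f$-ball never reaches the distant $g$-ball --- no edge connects the two, and the constraint $f(x_i)\ne x_j$ involves only the $f$-part --- so by scale invariance of the Gauss-map forms this part of the integral depends only on $f$ and reproduces the fiber integral defining $I(\Gamma)$; that is, it equals $\pi_1^*I(\Gamma)$. Symmetrically the all-in-$g$ piece equals $\pi_2^*I(\Gamma)$, which proves the displayed form identity and hence the lemma.

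The hard part will be justifying the vanishing of the mixed contributions over the \emph{compactified} fiber rather than over its interior: one must check that on every boundary stratum of the Bott--Taubes compactification the cross-region edges still have Gauss maps bounded away from the poles, so that no anomalous boundary term can reinstate a mixed contribution. Here the analysis of hidden and infinity faces from Section~\ref{s:closed}, in particular Theorem~\ref{thm:nonzero_anomalous}, together with the separation $L\to\infty$, should suffice. A secondary point is to confirm that the widely separated model is genuinely homotopic, as a map $\K{3}\times\K{3}\to\K{3}$, to the $H$-space product defining $\Delta$, and that the connectedness of each $\Gamma_i$ rules out any product term in $H^{>0}_{DR}\otimes H^{>0}_{DR}$.
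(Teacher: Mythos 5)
Your overall strategy (pull back along a widely separated model of $\sharp$, localize the fiber integral via Assumption~1, and read off the K\"unneth components) is essentially the geometric mechanism behind the coproduct formula the paper uses, but it breaks down at the decisive step: the claim that connectedness of each $\Gamma_i$ rules out mixed contributions. The relevant notion is connectivity of the graph formed by the \emph{edges alone}, not connectivity through the distinguished line: in a mixed configuration the knot $f\sharp g$ itself travels from the $f$-ball to the $g$-ball, so i-vertices may sit in different balls without any edge of $\Gamma_i$ crossing between them, and the line carries no Gauss form, so Assumption~1 does not suppress such configurations. In fact $\Gamma_3$ and $\Gamma_4$ are disconnected in this sense: each contains a loop edge at the i-vertex $1$ that is disjoint from the rest of the graph (supported on vertices $2,\dots,5$). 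Cutting the line between vertex $1$ and vertex $2$ therefore produces genuine cross terms: in the separated limit the integral over such configurations factorizes (Fubini), and instead of your claimed identity one gets, at the level of forms,
\[
 \sharp^* I(\Gamma)=\pi_1^*I(\Gamma)+\pi_2^*I(\Gamma)
 +\pi_1^*I(\Gamma')\wedge\pi_2^*I(\Gamma'')+\pi_1^*I(\Gamma'')\wedge\pi_2^*I(\Gamma'),
\]
where $\Gamma'$ is the one-edge loop graph and $\Gamma''$ is a trivalent graph cocycle (Figure~\ref{fig:coprod}), coming from the nontrivial terms of the graph-complex coproduct $\Delta(\Gamma_3-\Gamma_4)$. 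These cross terms are \emph{not} negligible for degree reasons: $I(\Gamma')$ is a $1$-form and $I(\Gamma'')$ is a $0$-form (essentially $v_2$, hence nonconstant across components), so they lie in exactly the same K\"unneth bidegrees $(1,0)$ and $(0,1)$ as the terms you want; your closing remark about $H^{>0}_{DR}\otimes H^{>0}_{DR}$ is vacuous in total degree one and does not address them.

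The lemma is still true, but only because of an additional argument that your proposal is missing entirely: the cross terms vanish \emph{in cohomology}, not as forms. One checks that $\Gamma'=\delta\Gamma_0$ in the graph complex for an explicit one-edge graph $\Gamma_0$, and that the boundary of the fiber of $\pi_{\Gamma_0}$ has no hidden faces, so the Stokes argument applies without anomalous correction and gives $I(\Gamma')=dI(\Gamma_0)$; hence $[I(\Gamma')]=0$ and the cross terms die in $H^*_{DR}(\K{3})^{\otimes 2}$. (The paper reaches the displayed identity not by redoing the separation analysis but by citing the compatibility $\Delta I(X)=(I\otimes I)\Delta X$, valid for $n=3$ once $dI(X)=I(\delta X)$ is known from Theorem~\ref{thm:closed}, and then computing the graph coproduct of the nine $\Gamma_i$.) Separately, your localization also needs more care on the connecting segment: an edge from an i-vertex on the neck to a nearby free vertex can have Gauss map pointing at a pole, so such configurations are not killed by Assumption~1 and must be handled by a scaling/translation argument; but the irreparable flaw as written is the false connectivity claim and the resulting omission of the exactness argument for $I(\Gamma')$.
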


\begin{proof}
$\D$ also admits $\Delta$ defined as a ``separation'' of the graphs by removing a point from the specified oriented line (see \cite[\S3.2]{CCL05}).
Theorem~6.3 of \cite{CCL05} shows, without using $n>3$, that $(I\otimes I)\Delta (X)=\Delta I(X)$ if $X$ satisfies $dI(X)=I(\delta X)$.

As for our graphs in Figure~\ref{fig:Gamma}, $\Delta\Gamma_i=1\otimes\Gamma_i+\Gamma_i\otimes 1$ ($i\ne 3,4$) and
\[
 \Delta (\Gamma_3-\Gamma_4)=1\otimes (\Gamma_3-\Gamma_4)+(\Gamma_3-\Gamma_4)\otimes 1+\Gamma'\otimes\Gamma''+\Gamma''\otimes\Gamma',
\]
where $\Gamma'$ and $\Gamma''$ are as shown in Figure~\ref{fig:coprod}.
\begin{figure}
\includegraphics{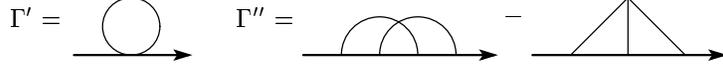}
\caption{Graph cocycles $\Gamma'$ and $\Gamma''$}
\label{fig:coprod}
\end{figure}
Thus
\[
 \Delta I(\Gamma )=1\otimes I(\Gamma )+I(\Gamma )\otimes 1+I(\Gamma')\otimes I(\Gamma'')+I(\Gamma'')\otimes I(\Gamma').
\]
But in fact $\Gamma'=\delta\Gamma_0$ where $\Gamma_0=\fig{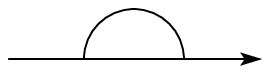}$, and $I(\Gamma')=dI(\Gamma_0 )$ since there is no hidden face in the boundary of the fiber of $\pi_{\Gamma_0}$.
\end{proof}

By \eqref{eq:leibniz}, Lemma~\ref{lem:coprod} and Theorem~\ref{thm:Gramain},
\[
 \pair{I(\Gamma )}{r_*\lambda (f^k,g^l)}=\pair{I(\Gamma )}{r_*\lambda (f^0,g^0)}+lv_2(f)+kv_2(g).
\]
Thus it suffices to prove Theorem~\ref{thm:bracket} in the case $k=l=0$.

\begin{proof}[Proof of Theorem \ref{thm:bracket}]
Fix $g$ and regard $\pair{I(\Gamma )}{r_*\lambda (f^0,g^0)}$ as an invariant $V_g(f)$ of $f$.
We choose two crossings $p_1$ and $p_2$ from the diagram of $f$ in $xy$-plane, and compute
$D^2 V_g (f):=\sum_{\varepsilon_1,\varepsilon_2}\varepsilon_1\varepsilon_2\pair{I(\Gamma )}{r_*\lambda (f^0_{\varepsilon_1,\varepsilon_2},g^0)}$ in the limit $h\to 0$ as in \S\ref{ss:Gramain}.
If this is zero for any $(p_1,p_2)$, then the arguments similar to that in the proof of Theorem~\ref{thm:Gramain} show that $V_g$ is of order two and takes the value zero for the trefoil knot, thus identically $V_g=0$ for any $g$.
This will complete the proof.

We will compute each $P'_i:=\sum_{\varepsilon =\pm 1}\pair{I(\Gamma_i )}{r_*\lambda (f^0_{\varepsilon_1,\varepsilon_2},g^0)}$ ($1\le i\le 9$) in the limit $h\to 0$.
The two observations appearing in the proof of Lemma~\ref{lem:13-24} allow us to conclude $P'_i\to 0$ for $4\le i\le 9$ in the same way as before, so we compute $P'_i$ for $i=1,2,3$ below.
We may concentrate to the integration over $C'_{\Gamma_i}$ by the first observation.
Recall $C'_{\Gamma_i}\subset S^1\times\Conf (\R^1,s)\times\Conf (\R^3,t)$ by definition.
We take $S^1$-parameter $\alpha\in S^1=\R^1/2\pi\Z$ so that $g$ goes through $f$ during $0\le\alpha\le\pi$, and $f$ goes through $g$ during $\pi\le\alpha\le 2\pi$.

First consider the integration over $0\le\alpha\le\pi$.
We may shrink $g$ sufficiently small.
Then the sliding of $g$ through $f$ does not affect the integration, so almost all the integrations converge to zero for the same reasons as in Lemmas~\ref{lem:13-24} and \ref{lem:other_types}.
Only the configurations $(x_i)\in C'_{\Gamma_1}$ with $x_1$ and $x_2$ near $p_1$ may essentially contribute to $P'_1$ when $g$ comes around $p_1$; the form $\varphi^*_{12}{\rm vol}_{S^2}$ may detect the knotting of $g$.
However two terms for $\varepsilon_1=\pm 1$ cancel each other.

Next consider the integration over $\pi\le\alpha\le 2\pi$.
There may be two types of contributions to $P'_i$.
One type comes from the configurations in which all the points on the knot concentrate in a neighborhood of $f$.
Such a contribution depends only on the framing number ${\rm fr}g$ of $g$, not on the global knotting of $g$.
Since ${\rm fr}g^0=0$ here, such configurations do not essentially contribute to $P'_i$.

The other possible contributions arise when $f$ comes near the crossings of $g$.
For example, consider the case that $(p_1,p_2)$ respects \fig{13-24.eps}.
When $f$ comes near a crossing of $g$, a configuration $(x_1,\dots ,x_5)\in C_{\Gamma_1}$ as in Figure~\ref{fig:contribution} is certainly in $C'_{\Gamma_1}$, so it may contribute to $P'_1$.
\begin{figure}
\includegraphics{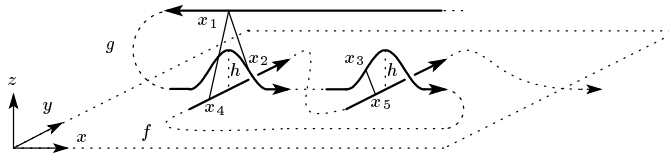}
\caption{When $f$ comes near an under-arc of $g$}
\label{fig:contribution}
\end{figure}
However, such contributions converge to zero in the limit $h\to 0$, because $x_1$ cannot be near $p_1$ (see the second observation in the proof of Lemma~\ref{lem:13-24}).
For $\Gamma_3$, we should take the configuration $(x_1,\dots ,x_5)$ with $x_j$ ($2\le j\le 5$) near $t_{j-1}$ into account; but in this case the Gauss map $\varphi_{11}$ cannot have the image in the support of ${\rm vol}_{S^2}$.
In such ways we can check that all such contributions of $\Gamma_i$ ($i=1,2,3$) can be arbitrarily small.
\end{proof}

\providecommand{\bysame}{\leavevmode\hbox to3em{\hrulefill}\thinspace}
\providecommand{\MR}{\relax\ifhmode\unskip\space\fi MR }
\providecommand{\MRhref}[2]{%
  \href{http://www.ams.org/mathscinet-getitem?mr=#1}{#2}
}
\providecommand{\href}[2]{#2}

\end{document}